  	\newcommand{\Z}{\ensuremath{\mathbb{Z}}}
\theoremstyle{plain}
\newtheorem*{NewTheoremA}{Theorem A}
\newtheorem*{NewPropositionB}{Proposition B}
\newtheorem{theorem}{Theorem}[section]
\newtheorem{lemma}[theorem]{Lemma}
\title{Planarity of Cayley graphs of graph products of groups}
\author{Olga Varghese}
\thanks{{Funded by the Deutsche 
Forschungsgemeinschaft (DFG, German Research Foundation) under Germany's 
Excellence Strategy -EXC 2044-, Mathematics M\"unster: Dynamics-Geometry-Structure}}
\date{\today}
\address{Olga Varghese\\
Department of Mathematics\\
M\"unster University\\ 
Einsteinstra\ss e 62\\
48149 M\"unster (Germany)}
\email{olga.varghese@uni-muenster.de}
\begin{document}

\pagenumbering{arabic}
\begin{abstract}
We obtain a complete classification of graph products of finite abelian groups whose Cayley graphs with respect to the standard presentations are planar.  
\end{abstract}
\keywords{Cayley graphs, planarity, graph products of groups}
\maketitle

\section{Introduction}

This article is located in the intersection of graph theory and group theory. One is interested in understanding which Cayley graphs  can be drawn in the Euclidean plane in such a way that pairs of edges intersect only at vertices. These graphs are called planar.  

All finite groups with planar Cayley graphs were classified by Maschke in \cite{Maschke}. We are interested in understanding which Cayley graphs of graph products of finite abelian groups with the standard generating set are planar. More precisely, let $\Gamma=(V,E)$ be a finite simplicial graph. A vertex labeling on $\Gamma$  is a map $\varphi:V\rightarrow \left\{\text{non-trivial finite abelian groups}\right\}$. A graph $\Gamma$ with a vertex labeling is called a graph product graph. The graph product of groups  $G(\Gamma)$ is the group obtained from the free product of the $\varphi(v)$, by adding the commutator relations $[g,h] = 1$ for all $g\in\varphi(v), h\in\varphi(w))$ such that $\left\{v,w\right\}\in E$. These groups were introduced by Baudisch in \cite{Baudisch} for $\varphi(v)\cong\mathbb{Z}$ for all $v\in V$ and later by Green for arbitrary vertex groups \cite{Green}. For example, free and direct products of finite abelian groups are graph products of groups where the graph product graph is discrete resp. complete. If all $\varphi(v)$ are of order two, then the graph product of groups $G(\Gamma)$ is a right angled Coxeter group.
Let $G(\Gamma)$ be a graph product of finite cyclic groups, i. e. for each vertex $v\in V$ the group $\varphi(v)$ is a finite cyclic group. For each vertex $v\in V$ we choose $a_v\in\varphi(v)$ such that $\langle a_v\rangle=\varphi(v)$ and we denote the Cayley graph of $G(\Gamma)$ associated to the generating set $\left\{a_v\mid v\in V\right\}$ by ${\rm Cay}(G(\Gamma))$. 

It is natural to ask how the shape of $\Gamma$ affects the planarity of ${\rm Cay}(G(\Gamma))$.  For right angled Coxeter groups this has been answered by Droms in \cite[Theorem 1]{Droms}: Let $G(\Gamma)$ be a right angled Coxeter group. The Cayley graph ${\rm Cay}(G(\Gamma))$ is planar iff $\Gamma$ is outerplanar, i.e. $\Gamma$ has no subdivision of $K_4$ or $K_{2,3}$ as a subgraph.

\begin{figure}[h]
\begin{center}
\begin{tikzpicture}
\draw[fill=black]  (0,0) circle (2pt);
\draw[fill=black]  (2,0) circle (2pt);
\draw[fill=black]  (0,2) circle (2pt);
\draw[fill=black]  (2,2) circle (2pt);
\draw (0,0)--(2,0);
\draw (0,0)--(0,2);
\draw (2,0)--(2,2);
\draw (0,2)--(2,2);
\draw (0,0)--(2,2);
\draw (0,2)--(2,0);
\node at (1,-0.5) {$K_4$};

\draw[fill=black]  (4,0.5) circle (2pt);
\draw[fill=black]  (4,1.5) circle (2pt);
\draw[fill=black]  (6,0) circle (2pt);
\draw[fill=black]  (6,1) circle (2pt);
\draw[fill=black]  (6,2) circle (2pt);

\draw (4,0.5)--(6,0);
\draw (4,0.5)--(6,1);
\draw (4,0.5)--(6,2);
\draw (4,1.5)--(6,0);
\draw (4,1.5)--(6,1);
\draw (4,1.5)--(6,2);
\node at (5,-0.5) {$K_{2,3}$};
\end{tikzpicture}
\end{center}
\end{figure}

\newpage
We obtain the following characterization for graph products of finite cyclic groups. 
\begin{NewTheoremA}
Let $\Gamma$ be a graph product graph of finite cyclic groups. Let $\Gamma_{=2}$ be the subgraph generated by its vertices labelled with groups of order $2$ and $\Gamma_{>2}$ the subgraph generated by its vertices labelled with groups of order $>2$. The Cayley graph ${\rm Cay}(G(\Gamma))$ is planar if and only if:
\begin{enumerate}
\item[(i)] $\Gamma_{=2}$ is outerplanar
\item[(ii)] $\Gamma_{>2}$ is a discrete graph
\item[(iii)] If $v\in\Gamma_{>2}$ is a vertex, then the subgraph of $\Gamma$ generated by its vertices adjacent to $v$  is empty or consists of one vertex or of two disjoint vertices.
\item[(iv)] If $\Gamma'\subseteq \Gamma$ is an induced cycle, then $\Gamma'\subseteq\Gamma_{=2}$.
\end{enumerate} 
\end{NewTheoremA}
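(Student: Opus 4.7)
I would prove both directions via the standard parabolic structure of graph products: for any induced subgraph $\Gamma' \subseteq \Gamma$, the natural homomorphism $G(\Gamma') \hookrightarrow G(\Gamma)$ is injective, and $\mathrm{Cay}(G(\Gamma'))$ embeds as a subgraph of $\mathrm{Cay}(G(\Gamma))$. Hence planarity passes to induced subgraphs, which handles necessity once small obstructions have been identified. Sufficiency will be obtained by building a planar embedding inductively using the amalgamated product decomposition of $G(\Gamma)$ that comes from removing a single vertex of $\Gamma_{>2}$.

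\textbf{Necessity.} Condition (i) is immediate from Droms' theorem applied to the right-angled Coxeter parabolic $G(\Gamma_{=2})$. For (ii), the obstruction is the subgroup $\mathbb{Z}/m \times \mathbb{Z}/n$ with $m,n \geq 3$ generated by two adjacent vertices of $\Gamma_{>2}$; its Cayley graph is the toroidal $m \times n$ grid, which contains a $K_{3,3}$ subdivision already when $m = n = 3$. For (iii), a vertex $v \in \Gamma_{>2}$ with three mutually non-adjacent order-$2$ neighbours generates the subgroup $\mathbb{Z}/n \times (\mathbb{Z}/2 * \mathbb{Z}/2 * \mathbb{Z}/2)$, while two adjacent order-$2$ neighbours give $\mathbb{Z}/n \times (\mathbb{Z}/2 \times \mathbb{Z}/2)$; in each case I would exhibit an explicit $K_{3,3}$ or $K_5$ subdivision inside the corresponding Cayley graph. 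For (iv), an induced cycle of length $\geq 3$ containing a vertex $v \in \Gamma_{>2}$ gives a finite graph product in which an analogous non-planar subdivision can be located by direct inspection.

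\textbf{Sufficiency.} Assume (i)-(iv). Droms' theorem gives a planar embedding of $\mathrm{Cay}(G(\Gamma_{=2}))$. I would then add the vertices of $\Gamma_{>2}$ one at a time. For $v \in \Gamma_{>2}$ with link $\Delta := \mathrm{lk}(v)$, condition (ii) forces $\Delta \subseteq \Gamma_{=2}$ and condition (iii) forces $\Delta$ to be discrete with at most two vertices. The splitting
\[
G(\Gamma) \;=\; G\bigl(\Gamma \setminus \{v\}\bigr) *_{G(\Delta)} G(\mathrm{st}(v))
\]
shows that $\mathrm{Cay}(G(\Gamma))$ is built from $\mathrm{Cay}(G(\Gamma \setminus v))$ by attaching, along every coset copy of $\mathrm{Cay}(G(\Delta))$, a fresh copy of $\mathrm{Cay}(G(\mathrm{st}(v)))$, in the pattern dictated by the Bass-Serre tree. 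Here $G(\mathrm{st}(v))$ is one of $\mathbb{Z}/n$, $\mathbb{Z}/n \times \mathbb{Z}/2$, or $\mathbb{Z}/n \times (\mathbb{Z}/2 * \mathbb{Z}/2)$, each of whose Cayley graphs is easily drawn in the plane with $\mathrm{Cay}(G(\Delta))$ lying on the boundary of a single face.

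\textbf{Main obstacle.} The technical heart is to show that each amalgamation step can be performed planarly, i.e.\ to prove inductively that the ambient embedding of $\mathrm{Cay}(G(\Gamma \setminus v))$ can be chosen so that every coset copy of $\mathrm{Cay}(G(\Delta))$ bounds a face. Condition (iv) is what allows this: in its absence, an induced cycle through a vertex of $\Gamma_{>2}$ would knit the parabolic $\mathrm{Cay}(G(\mathrm{lk}(v)))$ into the interior of the ambient Cayley graph and prevent it from lying on a common face. I therefore expect the core of the proof to be a \emph{face lemma} of the form ``under (i)-(iv), $\mathrm{Cay}(G(\Gamma_{=2}))$ admits a planar embedding in which, for every $v \in \Gamma_{>2}$, every coset copy of $\mathrm{Cay}(G(\mathrm{lk}(v)))$ lies on a common face,'' together with a bookkeeping argument showing this face-invariant is preserved by each subsequent amalgamation.
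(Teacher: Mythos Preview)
Your necessity argument matches the paper's: both reduce to finding explicit $K_{3,3}$ subdivisions in the Cayley graphs of the small parabolic obstructions (the paper does this in Lemmas~4.1, 4.2, 4.4) and invoke Droms for condition~(i).

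For sufficiency your route diverges from the paper's, and the divergence matters. You split
\[
G(\Gamma)\;=\;G(\Gamma\setminus\{v\})\ast_{G(\Delta)}G(\mathrm{st}(v)),\qquad \Delta=\mathrm{lk}(v),
\]
and when $|\Delta|=2$ this is an amalgam over the infinite dihedral group $\mathbb{Z}_2\ast\mathbb{Z}_2$. You correctly flag that the whole proof then hinges on an unproven ``face lemma'' asserting that every coset copy of $\mathrm{Cay}(\mathbb{Z}_2\ast\mathbb{Z}_2)$ can be made to lie on a single face of the ambient planar embedding. The paper never needs this. Its key observation is that condition~(iv) forces the two vertices $v_1,v_2$ of $\mathrm{lk}(v)$ to lie in \emph{different connected components} of $\Gamma\setminus\{v\}$ (otherwise a shortest path from $v_1$ to $v_2$ avoiding $v$ closes up to an induced cycle through $v$). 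This lets the paper decompose $H=\Gamma_{=2}\cup\{v\}$ as $\Gamma_1\cup\Gamma_2$ with $\Gamma_1\cap\Gamma_2=\{v_1\}$ a \emph{single} $\mathbb{Z}_2$-vertex, where $\Gamma_2$ is the component of $v_2$ together with the path $v_1\text{--}v\text{--}v_2$. Planarity of $\mathrm{Cay}(G(\Gamma_2))$ then follows from Lemma~4.3 (the path case) and one further $\mathbb{Z}_2$-amalgam, and the final step is again an amalgam over $\mathbb{Z}_2$. Every amalgamation in the paper is therefore over the trivial group or $\mathbb{Z}_2$, and planarity is preserved by the black-box result of Arzhantseva--Cherix (Lemma~3.2).

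So your proposal is not wrong in spirit, but it trades a one-line structural consequence of~(iv) for a genuinely hard geometric lemma that you leave open. The paper's decomposition is the missing idea: exploit~(iv) not merely as an obstruction but constructively, to avoid ever amalgamating over anything larger than $\mathbb{Z}_2$.
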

For example, the Cayley graph of the graph product of groups associated to the following graph product graph is planar.

\begin{figure}[h]
\begin{center}
\begin{tikzpicture}
\draw[fill=black]  (0.7,0) circle (2pt);
\draw[fill=black]  (2.3,0) circle (2pt);
\draw[fill=black]  (0,2) circle (2pt);
\draw[fill=black]  (3,2) circle (2pt);
\draw[fill=black]  (1.5,3) circle (2pt);
\draw (0.7,0)--(2.3,0);
\draw (0,2)--(0.7,0);
\draw (0,2)--(1.5,3);
\draw (1.5,3)--(3,2);
\draw (2.3,0)--(3,2);
\draw (0.7,0)--(1.5,3);
\draw (2.3,0)--(1.5,3);

\draw[fill=black]  (1.5,4) circle (2pt);

\draw[fill=black]  (4,4) circle (2pt);
\draw[fill=black]  (6,3) circle (2pt);
\draw (1.5,3)--(4,4);
\draw (6,3)--(4,4);

\draw[fill=black]  (5,2) circle (2pt);
\draw[fill=black]  (7,2) circle (2pt);
\draw[fill=black]  (4,1) circle (2pt);
\draw[fill=black]  (6,1) circle (2pt);
\draw (5,2)--(6,3);
\draw (7,2)--(6,3);
\draw (4,1)--(5,2);
\draw (6,1)--(5,2);

\node at (0.7,-0.3) {$\mathbb{Z}_2$};
\node at (2.3,-0.3) {$\mathbb{Z}_2$};
\node at (-0.3,2) {$\mathbb{Z}_2$};
\node at (3.3,2) {$\mathbb{Z}_2$};

\node at (1.5,3.3) {$\mathbb{Z}_2$};
\node at (1.5,4.3) {$\mathbb{Z}_{23}$};

\node at (4,0.7) {$\mathbb{Z}_2$};
\node at (6,0.7) {$\mathbb{Z}_2$};
\node at (7,1.7) {$\mathbb{Z}_4$};
\node at (6,3.3) {$\mathbb{Z}_2$};
\node at (4,4.3) {$\mathbb{Z}_6$};
\end{tikzpicture}
\end{center}
\end{figure}

We want to remark that the graph product  of finite abelian groups $G(\Gamma)$ is isomorphic to the graph product of groups $G(\Gamma')$ obtained by replacing each vertex $v\in\Gamma$  by a complete graph with vertices labelled by the cyclic summands of $\varphi(v)$. Thus, Theorem A gives a complete characterization of graph products of finite abelian groups whose Cayley graphs with the standard generating sets are planar.

Our proof of Theorem A involves three ingredients. First we prove that some  Cayley graphs of 'small' graph products of groups are planar and some not. Next, we need some group constructions preserving planarity which were proven in \cite[Theorem 3]{Arzhantseva}. In the third step, we build the whole graph product of groups $G(\Gamma)$ out of subgroups $G(\Gamma')$, whose Cayley graphs are planar, using group constructions preserving planarity.

Planarity of a Cayley graph has algebraic consequences for the group. In particular, we deduce from Droms, Servatius and Babai results in \cite{Babai}, \cite{Droms2}, \cite{DromsServatius} 
the following consequence.

\begin{NewPropositionB}
Let $G$ be a finitely generated group. If $G$ has a planar Cayley graph, then $G$ is coherent (i.e. every finitely generated subgroup of $G$ is finitely presented).
\end{NewPropositionB}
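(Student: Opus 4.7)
The plan is to deduce coherence of $G$ by verifying that every finitely generated subgroup $H \leq G$ is finitely presented, by concatenating two classical results from the cited works on planar Cayley graphs.

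First I would invoke the theorem of Droms and Servatius \cite{DromsServatius}, which asserts that a finitely generated subgroup of a finitely generated group with some planar Cayley graph again admits a planar Cayley graph (with respect to some finite generating set of its own, in general not obtained from the ambient generating set by restriction). Applied to our $G$, this passes the planarity hypothesis down from $G$ to the arbitrary finitely generated subgroup $H$.

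Next I would invoke the combined work of Babai \cite{Babai} and Droms \cite{Droms2} to the effect that a finitely generated group with a planar Cayley graph is automatically finitely presented. Morally, Babai supplies the structural control on the ends and the accessibility of such a group, and Droms extracts from this an explicit finite presentation. Applying this to $H$, we conclude that $H$ is finitely presented, and since $H$ was an arbitrary finitely generated subgroup, $G$ is coherent.

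Since the argument is a direct concatenation of two cited theorems, there is no real mathematical obstacle; the only point that requires care is verifying compatibility between the two inputs, namely that the planar Cayley graph produced for $H$ by Droms--Servatius is indeed of the form (finite generating set, planar in the same sense) required as the hypothesis of the Babai--Droms finite-presentation theorem. This matching of conventions should be immediate from the statements in the references, after which the deduction is one line.
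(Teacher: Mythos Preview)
Your proposal is correct and follows essentially the same two-step concatenation as the paper: pass planarity down to the finitely generated subgroup $H$, then conclude that $H$ is finitely presented. The only discrepancy is in the attribution of Babai's contribution: in the paper, \cite{Babai} is cited (alongside \cite{DromsServatius}) for the fact that subgroups inherit planar Cayley graphs, while the finite-presentation step is attributed solely to \cite[Theorem 5.1]{Droms2}.
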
 
Let us mention that the converse of the result in Proposition B is not true, as we can consider the group $\mathbb{Z}_3\times\mathbb{Z}_3$ which is coherent but does not have planar Cayley graph.

\section{Graphs}
In this section we briefly present the main definitions and properties concerning simplicial and directed graphs, in particular Cayley graphs. A detailed description of these graphs and their properties can be found in 
 \cite{Diestel}, \cite{Harris} or \cite{Serre}.
 
\subsection{Simplicial graphs}
A {\it simplicial graph} $\Gamma=(V,E)$ consists of a set $V\neq\emptyset$ and a set $E$ of $2$-element subsets of $V$. The elements of $V$ are called {\it vertices} and the elements of $E$ are its {\it edges}.  If $V'\subseteq V$ and $E'\subseteq E$, then $\Gamma'=(V', E')$ is called a {\it subgraph} of $\Gamma$. If $\Gamma'$ is a subgraph of $\Gamma$ and $E'$ contains all the edges $\left\{v, w\right\}\in E$ with $v, w\in V'$, then $\Gamma'$ is called an {\it induced subgraph} of $\Gamma$. For $V'\subseteq V$ we denote by $\langle V'\rangle$ the smallest induced subgraph of $\Gamma$ with $(V',\emptyset)\subseteq \langle V'\rangle$. This subgraph is called a graph {\it generated} by $V'$. A {\it path} of length $n$ is a graph $P_n=(V, E)$ of the form  $V=\left\{v_0, \ldots, v_n\right\}$ and $E=\left\{\left\{v_0, v_1\right\}, \left\{v_1, v_2\right\}, \ldots, \left\{v_{n-1}, v_n\right\}\right\}$ where the $v_i$, $0\leq i\leq n$, are pairwise distinct. If $P_n=(V,E)$ is a path of length $n\geq 3$, then the graph $C_{n+1}:=(V, E\cup\left\{\left\{v_n, v_0\right\}\right\})$ is called a {\it cycle} of length $n+1$.
The {\it complete graph} $K_n$ is the graph with $n$ vertices and an edge for every pair of vertices. The {\it complete bipartite graph} $K_{n,m}$ is the graph such that the vertex set $V$ of $K_{n,m}$ is a disjoint union of $V_1$ and $V_2$ of cardinality $n$ resp. $m$ such that for every $v\in V_1$ and $w\in V_2$ $\left\{v, w\right\}$ is an edge of $K_{n,m}$ and the subgraph generated by the vertex set $V_1$ resp. $V_2$ is discrete. A graph $\Gamma=(V,E)$ is called {\it connected} if any two vertices $v, w\in V$ are contained in a subgraph $\Gamma'$ of $\Gamma$ such that $\Gamma'$ is a path. A maximal connected subgraph of $\Gamma$ is called a {\it connected component} of $\Gamma$. A {\it subdivision of a graph} $\Gamma$ is a graph $\Gamma'$ obtained from $\Gamma$ by replacing edges through paths of finite lenght.

\subsection{Directed graphs}
A {\it directed graph} consists of two sets, the set of vertices $V\neq\emptyset$ and the set of edges $E$ with two functions: $\iota:E\rightarrow V$ and $\tau:E\rightarrow V$. The vertex $\iota(e)$ is called the {\it initial vertex} of $e\in E$ and $\tau(e)$ is called its {\it terminal vertex}.

Let $G$ be a group with finite generating set $S$. We call a directed graph ${\rm Cay}(G,S)$ with vertex set $G$ and  edge set $G\times S$, where the edge $(g, s)$ has initial vertex $g$ and terminal vertex $gs$ the {\it Cayley graph} of $G$ associated to the generating set $S$. 
The Cayley graph depends on the choice of the generating set of the group. 
For example ${\rm Cay}(\mathbb{Z}_5,\left\{ 1\right\})$  and ${\rm Cay}(\mathbb{Z}_5,\left\{1, 2, 3, 4\right\})$ are very different from the graph theoretical point of view.

\subsection{Planarity of graphs}
In practice, a simplicial or directed graph is often represented by a diagram. We draw a point for each vertex $v$ of the graph, and a line resp. directed line joining two points $v$ and $w$ if $\left\{v, w\right\}$ is an edge resp. if there exists an edge $e$ with $\iota(e)=v$ and $\tau(e)=w$.

A graph is called {\it planar} if it can be drawn in the Euclidean plane in such a way that pairs of edges intersect only at vertices. For example, the graph $K_4$ is planar and the graphs $K_5$ and $K_{3,3}$  are non-planar, see \cite[Theorems 1.13, 1.15]{Harris}.

                                                                                                    By a fundamental result of Kuratowski \cite{Kuratowski} every finite non-planar graph contains a subdivision of $K_5$ or $K_{3,3}$ as a subgraph. An analogous result for infinite non-planar graphs is proven by Dirac and Schuster in \cite{Dirac}.

We are interested in planarity of Cayley graphs ${\rm Cay}(G, S)$. These graphs are directed and possibly have two edges between two vertices. More precisely, if $s\in S$ has order $2$ or if $s$ and $s^{-1}$ are both contained in $S$, then there exist two edges between $g$ and $gs$. We can glue these two edges together and this process does not change the planarity. The direction of the edges is also not important for planarity. Thus, we forget the direction of the edges and we draw all Cayley graphs  in this paper with undirected edges and without multiple edges. 

\section{Graph products of groups}
In this section we collect some useful properties concerning graph products of groups. 
\begin{lemma}
\label{subgroup}
Let $\Gamma$ be a graph product graph. If $\Gamma'$ is an induced subgraph of $\Gamma$, then ${\rm Cay}(G(\Gamma'))$ is a subgraph of ${\rm Cay}(G(\Gamma))$.
\end{lemma}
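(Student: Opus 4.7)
The plan is to exhibit ${\rm Cay}(G(\Gamma'))$ as a subgraph of ${\rm Cay}(G(\Gamma))$ by first realising $G(\Gamma')$ as a subgroup of $G(\Gamma)$ in the natural way, and then checking that the generating set used on $G(\Gamma')$ is the restriction of the one used on $G(\Gamma)$, so that edges map to edges.

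First I would construct two group homomorphisms between $G(\Gamma')$ and $G(\Gamma)$. Since $\Gamma'$ is induced, every defining relation of $G(\Gamma')$ (internal relations of each $\varphi(v)$ with $v\in V(\Gamma')$, and the commutator relations $[a_v,a_w]=1$ for $\{v,w\}\in E(\Gamma')$) already holds in $G(\Gamma)$. Using the universal property of the presentation of $G(\Gamma')$ this yields a homomorphism $\iota:G(\Gamma')\to G(\Gamma)$ with $\iota(a_v)=a_v$ for every $v\in V(\Gamma')$. In the opposite direction, I define $r:G(\Gamma)\to G(\Gamma')$ on generators by $r(a_v)=a_v$ if $v\in V(\Gamma')$ and $r(a_v)=1$ otherwise. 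This is well-defined because each defining relation of $G(\Gamma)$ either becomes trivial under $r$ or (for generators that both survive) is a relation of $G(\Gamma')$; here it is essential that $\Gamma'$ is an induced subgraph, so that any commutator relation between two surviving generators $a_v,a_w$ comes from an edge $\{v,w\}\in E(\Gamma)$ which necessarily lies in $E(\Gamma')$.

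Since $r\circ\iota$ is the identity on generators of $G(\Gamma')$, it is the identity on $G(\Gamma')$; in particular $\iota$ is injective. Identifying $G(\Gamma')$ with its image, the vertex set of ${\rm Cay}(G(\Gamma'))$ is a subset of the vertex set of ${\rm Cay}(G(\Gamma))$. The generating set $\{a_v\mid v\in V(\Gamma')\}$ chosen for $G(\Gamma')$ is contained in the generating set $\{a_v\mid v\in V(\Gamma)\}$ chosen for $G(\Gamma)$, so each edge $(g,a_v)$ of ${\rm Cay}(G(\Gamma'))$ — going from $g$ to $ga_v$ — is literally an edge of ${\rm Cay}(G(\Gamma))$ once we view $g$ as an element of $G(\Gamma)$. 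Hence ${\rm Cay}(G(\Gamma'))$ is a subgraph of ${\rm Cay}(G(\Gamma))$.

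The only non-routine step is the injectivity of $\iota$, which is where the hypothesis that $\Gamma'$ is \emph{induced} enters. Without that hypothesis, the retraction $r$ would fail to be well-defined, since an edge in $\Gamma\setminus\Gamma'$ between two vertices of $\Gamma'$ would force an additional commutator relation in $G(\Gamma)$ which need not hold in $G(\Gamma')$.
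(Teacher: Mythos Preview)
Your proof is correct. The paper's own proof is a one-line citation: it invokes Green's thesis \cite[3.20]{Green} for the fact that $G(\Gamma')$ embeds into $G(\Gamma)$ via the canonical generators, and then immediately concludes the Cayley-graph statement. You instead give a self-contained argument, constructing the retraction $r:G(\Gamma)\to G(\Gamma')$ that kills the generators outside $\Gamma'$ and using $r\circ\iota=\mathrm{id}$ to deduce injectivity. This is exactly how one would prove Green's result in this special case, and it has the advantage of making transparent where the ``induced'' hypothesis is used (namely in the well-definedness of $r$). The paper's approach is shorter but opaque; yours is longer but requires no external reference. Both arrive at the same conclusion by the same underlying mechanism, so the difference is one of exposition rather than mathematical content.
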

\begin{proof}
It was proven by Green in \cite[3.20]{Green} that the group $G(\Gamma')$ is a subgroup of $G(\Gamma)$ with the canonical generating set. Thus, ${\rm Cay}(G(\Gamma'))$ is a subgraph of ${\rm Cay}(G(\Gamma))$.
\end{proof}

We will need the following group constructions preserving planarity for the proof of Theorem A.
\begin{lemma}
\label{amalgam}
\begin{enumerate}
\item[(i)] Let $\Gamma$ be a graph product graph and let $\Gamma_1, \Gamma_2$ induced subgraphs such that $\Gamma=\Gamma_1\cup\Gamma_2$ and $\Gamma_1\cap\Gamma_2=\emptyset$. If ${\rm Cay}(G(\Gamma_1))$ and ${\rm Cay}(G(\Gamma_2)$ are planar, then ${\rm Cay}(G(\Gamma))$ is planar.
\item[(ii)] Let $\Gamma$ be a graph product graph and let $\Gamma_1, \Gamma_2$ be induced subgraphs such that $\Gamma=\Gamma_1\cup\Gamma_2$ and $\Gamma_1\cap\Gamma_2=(\left\{v\right\}, \emptyset)$ with $\varphi(v)\cong\mathbb{Z}_2$. If ${\rm Cay}(G(\Gamma_1))$ and ${\rm Cay}(G(\Gamma_2))$ are planar, then ${\rm Cay}(G(\Gamma))$ is planar.
\end{enumerate}
\end{lemma}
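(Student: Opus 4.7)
The plan is to reduce each statement to a free product or an amalgamated free product of $G(\Gamma_1)$ and $G(\Gamma_2)$, and then invoke the planarity preservation results of \cite[Theorem 3]{Arzhantseva}.

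For (i), since $\Gamma_1$ and $\Gamma_2$ are induced subgraphs with disjoint vertex sets whose union is all of $\Gamma$, every edge of $\Gamma$ lies either in $\Gamma_1$ or in $\Gamma_2$ and no edge connects a vertex of $\Gamma_1$ to a vertex of $\Gamma_2$. Looking at the defining presentation of the graph product, there are no commutator relations between generators coming from $\Gamma_1$ and generators coming from $\Gamma_2$, and the relations inside each part are exactly the defining relations of $G(\Gamma_i)$. Hence $G(\Gamma) \cong G(\Gamma_1) \ast G(\Gamma_2)$, with the canonical generating set being the union of the canonical generating sets. The Cayley graph of a free product with the union of the generating sets is obtained by the standard tree-of-Cayley-graphs construction, and the preservation of planarity under this operation is exactly one of the cases of \cite[Theorem 3]{Arzhantseva}; applying it yields that ${\rm Cay}(G(\Gamma))$ is planar.

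For (ii), the same analysis of edges shows that the only relations that mix generators from $\Gamma_1$ and $\Gamma_2$ are those coming from the shared vertex $v$, so
\[
G(\Gamma) \;\cong\; G(\Gamma_1) \ast_{\langle a_v \rangle} G(\Gamma_2),
\]
where $\langle a_v \rangle \cong \mathbb{Z}_2$ is identified as a subgroup of each factor via the inclusions $G(\Gamma_i) \hookrightarrow G(\Gamma)$ of Lemma~\ref{subgroup}. In the Cayley graph the generator $a_v$ has order $2$, so the corresponding edge is a single (undirected) edge; the amalgamation identifies copies of this edge across the two factors. Again \cite[Theorem 3]{Arzhantseva} covers this exact construction (amalgamation over a cyclic subgroup of order $2$ whose generator lies in the generating set of both factors), and the conclusion follows.

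The main thing to verify carefully is that the graph product really decomposes as claimed in each case, i.e. that the presentation of $G(\Gamma)$ coincides with the standard presentation of the (amalgamated) free product when the generating sets and the $\mathbb{Z}_2$ amalgamation are taken as above. This is straightforward from Green's definition, because the defining relations of $G(\Gamma)$ split cleanly into the relations of $G(\Gamma_1)$, the relations of $G(\Gamma_2)$, and (in case (ii)) the identification $a_v = a_v$ across the two factors. Once this identification is in place, the rest is a direct citation of \cite[Theorem 3]{Arzhantseva}.
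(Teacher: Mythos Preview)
Your proposal is correct and follows essentially the same approach as the paper: you identify $G(\Gamma)$ as the free product $G(\Gamma_1)\ast G(\Gamma_2)$ in case (i) and as the amalgam $G(\Gamma_1)\ast_{\varphi(v)} G(\Gamma_2)$ in case (ii), then invoke \cite[Theorem 3]{Arzhantseva} in both cases. The paper's proof is identical in structure, only slightly terser in justifying the decompositions.
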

\begin{proof}
The first statement of the above lemma follows from the fact that the graph product of groups $G(\Gamma)$ is a free product of $G(\Gamma_1)$ and $G(\Gamma_2)$ and by \cite[Theorem 3]{Arzhantseva} planarity is preserved under taking free products.

By the assumption of the second statement we have the following decomposition $G(\Gamma)=G(\Gamma_1)*_{\varphi(v)} G(\Gamma_2)$. By \cite[Theorem 3]{Arzhantseva} it follows that ${\rm Cay}(G(\Gamma))$ is planar.

\end{proof}

For example, the following figure shows a piece of the Cayley graph of $\Z_4*\Z_2$.

\begin{figure}[h]
\begin{center}
\begin{tikzpicture}[scale=0.75, transform shape]
\draw[fill=black]  (0,0) circle (2pt);
\draw[fill=black]  (2,0) circle (2pt);
\draw[fill=black]  (0,2) circle (2pt);
\draw[fill=black]  (2,2) circle (2pt);
\draw (0,0)--(2,0);
\draw (0,0)--(0,2);
\draw (2,2)--(2,0);
\draw (0,2)--(2,2);

\draw[fill=black]  (3,3) circle (2pt);
\draw[fill=black]  (4,3) circle (2pt);
\draw[fill=black]  (3,4) circle (2pt);
\draw[fill=black]  (4,4) circle (2pt);
\draw (3,3)--(4,3);
\draw (3,3)--(3,4);
\draw (4,3)--(4,4);
\draw (3,4)--(4,4);
\draw (2,2)--(3,3);

\draw[fill=black]  (4.5,2.5) circle (2pt);
\draw[fill=black]  (4.5,4.5) circle (2pt);
\draw[fill=black]  (2.5,4.5) circle (2pt);
\draw (4,3)--(4.5,2.5);
\draw (4,4)--(4.5,4.5);
\draw (3,4)--(2.5,4.5);

\draw[fill=black]  (3,-1) circle (2pt);
\draw[fill=black]  (4,-1) circle (2pt);
\draw[fill=black]  (3,-2) circle (2pt);
\draw[fill=black]  (4,-2) circle (2pt);
\draw (3,-1)--(4,-1);
\draw (4,-1)--(4,-2);
\draw (4,-2)--(3,-2);
\draw (3,-2)--(3,-1);
\draw (2,0)--(3,-1);

\draw[fill=black]  (4.5,-0.5) circle (2pt);
\draw[fill=black]  (4.5,-2.5) circle (2pt);
\draw[fill=black]  (2.5,-2.5) circle (2pt);
\draw (4,-1)--(4.5,-0.5);
\draw (4,-2)--(4.5,-2.5);
\draw (3,-2)--(2.5,-2.5);

\draw[fill=black]  (-1,3) circle (2pt);
\draw[fill=black]  (-2,3) circle (2pt);
\draw[fill=black]  (-1,4) circle (2pt);
\draw[fill=black]  (-2,4) circle (2pt);
\draw (-2,4)--(-1,4);
\draw (-1,4)--(-1,3);
\draw (-1,3)--(-2,3);
\draw (-2,3)--(-2,4);
\draw (0,2)--(-1,3);

\draw[fill=black]  (-0.5,4.5) circle (2pt);
\draw[fill=black]  (-2.5,4.5) circle (2pt);
\draw[fill=black]  (-2.5,2.5) circle (2pt);
\draw (-2,4)--(-2.5,4.5);
\draw (-1,4)--(-0.5,4.5);
\draw (-2,3)--(-2.5,2.5);

\draw[fill=black]  (-2,-1) circle (2pt);
\draw[fill=black]  (-1,-1) circle (2pt);
\draw[fill=black]  (-2,-2) circle (2pt);
\draw[fill=black]  (-1,-2) circle (2pt);
\draw (-2,-1)--(-1,-1);
\draw (-1,-1)--(-1,-2);
\draw (-1,-2)--(-2,-2);
\draw (-2,-2)--(-2,-1);
\draw (-1,-1)--(0,0);

\draw[fill=black]  (-0.5,-2.5) circle (2pt);
\draw[fill=black]  (-2.5,-0.5) circle (2pt);
\draw[fill=black]  (-2.5,-2.5) circle (2pt);
\draw (-2,-1)--(-2.5,-0.5);
\draw (-2,-2)--(-2.5,-2.5);
\draw (-1,-2)--(-0.5,-2.5);
\end{tikzpicture}
\end{center}
\end{figure}

\section{Small graph product graphs}
In this section we discuss some examples of planar and non-planar Cayley graphs of small graph product graphs. 

\begin{lemma}
\label{edge}
Let $\Gamma$ be the following graph product graph:
\begin{figure}[h]
\begin{center}
\begin{tikzpicture}
\draw[fill=black]  (0,0) circle (2pt);
\draw[fill=black]  (2,0) circle (2pt);
\draw (0,0)--(2,0);
\node at (0,-0.4) {$\langle v\rangle$}; 
\node at (2,-0.4) {$\langle w\rangle$}; 
 \end{tikzpicture}
\end{center}
\end{figure}

If ${\rm ord}(v)=n\geq 3$ and ${\rm ord}(w)=m\geq3$, then ${\rm Cay}(G(\Gamma))$ is non-planar.
\end{lemma}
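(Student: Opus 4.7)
The plan is to exhibit a topological copy of $K_5$ inside ${\rm Cay}(G(\Gamma))$ and then invoke Kuratowski's theorem (together with its extension to infinite graphs by Dirac and Schuster, already cited above) to conclude non-planarity.

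I would first identify $G(\Gamma) = \langle v \rangle \times \langle w \rangle \cong \Z_n \times \Z_m$ and label the vertices of ${\rm Cay}(G(\Gamma))$ by pairs $(i,j) \in \Z_n \times \Z_m$. Because $n, m \geq 3$, the elements $v, v^{-1}, w, w^{-1}$ are pairwise distinct, so after collapsing double edges (as justified in the preliminaries) every vertex has the four distinct neighbours $(i \pm 1, j)$ and $(i, j \pm 1)$.

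For the $K_5$ subdivision, I would take as branch vertices the set
\[
B = \{(0,0),\ (1,0),\ (n-1,0),\ (0,1),\ (0,m-1)\},
\]
which consists of five distinct points since $n, m \geq 3$, and specify ten internally disjoint paths, one for each pair in $B$. Four of them are the direct edges from $(0,0)$ to each of the other four branch vertices. The two 'axial' pairs are joined along the row $j=0$ by the path $(1,0), (2,0), \dots, (n-1,0)$ and along the column $i=0$ by the path $(0,1), (0,2), \dots, (0,m-1)$ (each reducing to a single edge when the relevant order equals $3$). The remaining four pairs are connected by length-two detours with middle vertices $(1,1)$, $(1,m-1)$, $(n-1,1)$, and $(n-1,m-1)$ respectively.

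The verification is then routine: all internal vertices appearing in these paths are pairwise distinct and lie outside $B$ because $n, m \geq 3$. The real obstacle is selecting a symmetric configuration in which one branch vertex is adjacent to all four others — which forces the 'L-shaped' axial choice above — so that the remaining six pairs admit simultaneously disjoint connecting paths; once that choice is made, the rest is mechanical.
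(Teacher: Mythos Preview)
Your argument is correct: the five branch vertices are distinct, the ten connecting paths are genuine paths in the Cayley graph, and the internal vertices $(2,0),\dots,(n-2,0)$, $(0,2),\dots,(0,m-2)$, $(1,1)$, $(1,m-1)$, $(n-1,1)$, $(n-1,m-1)$ are pairwise distinct and disjoint from $B$ whenever $n,m\geq 3$. So Kuratowski's theorem applies and ${\rm Cay}(G(\Gamma))$ is non-planar.

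The paper's own proof follows the same overall strategy but exhibits a subdivision of $K_{3,3}$ rather than $K_5$, taking $\{1,\,vw,\,vw^{m-1}\}$ and $\{v,\,w,\,v^{n-1}\}$ as the two vertex classes. The difference is essentially cosmetic: both arguments reduce to locating one of the two Kuratowski obstructions in the toroidal grid $C_n\times C_m$. Your $K_5$ configuration is perhaps a little more symmetric and easier to visualise (a centre with four axial neighbours plus four corner detours), while the paper's $K_{3,3}$ needs only three nontrivial paths but with less transparent routing. Neither approach yields more than the other; they are interchangeable proofs of the same lemma.
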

\begin{proof}
We show that a subdivision of $K_{3,3}$ is a subgraph of ${\rm Cay}(G(\Gamma))$. Since $K_{3, 3}$ is non-planar and non-planarity is preserved by subdivision of a graph it follows that ${\rm Cay}(G(\Gamma))$ is non-planar.
\begin{figure}[h]
\begin{center}
\begin{tikzpicture}
\draw[fill=black]  (0,0) circle (2pt);
\draw[fill=black]  (4,0) circle (2pt);
\draw[fill=black]  (0,2) circle (2pt);
\draw[fill=black]  (4,2) circle (2pt);
\draw[fill=black]  (0,4) circle (2pt);
\draw[fill=black]  (4,4) circle (2pt);

\draw[dashed] (0,0)--(4,0);
\draw[dashed] (0,0)--(4,2);
\draw (0,0)--(4, 4);

\draw[dashed] (0,2)--(4,0);
\draw (0,2)--(4,2);
\draw (0,2)--(4, 4);

\draw (0,4)--(4,0);
\draw (0,4)--(4,2);
\draw (0,4)--(4, 4);

\node at (-0.7,0) {$vw^{m-1}$}; 
\node at (-0.4,2) {$vw$}; 
\node at (-0.3, 4) {$1$}; 

\node at (4.55,0) {$v^{n-1}$}; 
\node at (4.3,2) {$w$}; 
\node at (4.3,4) {$v$};
\end{tikzpicture}
\end{center}
\end{figure}

The dashed paths have the following structure:

\begin{figure}[h]
\begin{center}
\begin{tikzpicture}
\draw[fill=black]  (0,0) circle (2pt);
\node at (0,-0.3) {$vw$};
\draw[fill=black]  (1,0) circle (2pt);
\node at (1,0.3) {$v^2w$};
\draw[fill=black]  (2,0) circle (2pt);
\node at (2,-0.3) {$v^3w$};
\draw[fill=black]  (3,0) circle (2pt);
\node at (3,0.3) {$v^{n-1}w$};
\draw[fill=black]  (4,0) circle (2pt);
\node at (4,-0.3) {$v^{n-1}$};
\draw (0,0)--(2, 0);
\draw[dashed] (2,0)--(3, 0);
\draw (3,0)--(4, 0);
\end{tikzpicture}
\end{center}
\end{figure}

\begin{figure}[h]
\begin{center}
\begin{tikzpicture}
\draw[fill=black]  (0,0) circle (2pt);
\node at (0,-0.3) {$vw^{m-1}$};
\draw[fill=black]  (1,0) circle (2pt);
\node at (1,0.3) {$w^{m-1}$};
\draw[fill=black]  (2,0) circle (2pt);
\node at (2,-0.3) {$w^3$};
\draw[fill=black]  (3,0) circle (2pt);
\node at (3,0.3) {$w^2$};
\draw[fill=black]  (4,0) circle (2pt);
\node at (4,-0.3) {$w$};
\draw (0,0)--(1, 0);
\draw[dashed] (1,0)--(2, 0);
\draw (2,0)--(4, 0);
\end{tikzpicture}
\end{center}
\end{figure}

\newpage
\begin{figure}[h]
\begin{center}
\begin{tikzpicture}
\draw[fill=black]  (0,0) circle (2pt);
\node at (0,-0.3) {$vw^{m-1}$};
\draw[fill=black]  (1,0) circle (2pt);
\node at (1,0.3) {$v^2w^{m-1}$};
\draw[fill=black]  (2,0) circle (2pt);
\node at (2,-0.3) {$v^3w^{m-1}$};
\draw[fill=black]  (3,0) circle (2pt);
\node at (3,0.4) {$v^{n-1}w^{m-1}$};
\draw[fill=black]  (4,0) circle (2pt);
\node at (4,-0.3) {$v^{n-1}$};
\draw (0,0)--(2, 0);
\draw[dashed] (2,0)--(3, 0);
\draw (3,0)--(4, 0);
\end{tikzpicture}
\end{center}
\end{figure}

Thus, ${\rm Cay}(G(\Gamma))$ is non-planar.
\end{proof}

\begin{lemma}
\label{star}
Let $\Gamma$ be the following graph product graph:  
\begin{figure}[h]
\begin{center}
\begin{tikzpicture}
\draw[fill=black]  (0,0) circle (2pt);
\draw[fill=black]  (2,2) circle (2pt);
\draw[fill=black]  (-2,2) circle (2pt);
\draw[fill=black]  (0,-2) circle (2pt);
\draw (0,0)--(2,2);
\draw (0,0)--(-2,2);
\draw (0,0)--(0,-2);
\node at (0,-2.3) {$\langle c\rangle$}; 
\node at (0.4,-0.2) {$\langle v\rangle$}; 
\node at (-2, 2.3) {$\langle a \rangle$}; 
\node at (2,2.3) {$\langle b\rangle$}; 
\end{tikzpicture}
\end{center}
\end{figure}

If ${\rm ord}(a)={\rm ord}(b)={\rm ord}(c)=2$ and ${\rm ord}(v)=n\geq3$, then $Cay(G(\Gamma))$ is non-planar. 
\end{lemma}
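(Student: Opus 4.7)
The plan is to exhibit a subdivision of $K_{3,3}$ as a subgraph of ${\rm Cay}(G(\Gamma))$ and then to invoke the extension of Kuratowski's theorem to infinite graphs (Dirac--Schuster) quoted in Section~2 to conclude non-planarity. This mirrors the strategy used in Lemma~\ref{edge}, but with a different configuration of vertices that exploits the existence of three leaves attached to $v$.

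First I would record the algebraic picture. Since $v$ is adjacent in $\Gamma$ to each of $a$, $b$, $c$ while $a,b,c$ are pairwise non-adjacent, the defining relations give
\[
G(\Gamma)\;\cong\;\langle v\rangle\times\bigl(\langle a\rangle\ast\langle b\rangle\ast\langle c\rangle\bigr)\;\cong\;\mathbb{Z}_n\times(\mathbb{Z}_2\ast\mathbb{Z}_2\ast\mathbb{Z}_2).
\]
In particular, for every exponent $k$ and every $s\in\{a,b,c\}$ the identity $v^k s = s v^k$ closes a commuting square, so in the Cayley graph one has an $s$-edge between $v^k$ and $v^k s$ and a $v$-edge between $s$ and $v^k s$.

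I would then take as the two partite classes of the desired $K_{3,3}$-subdivision
\[
X=\{1,\,v,\,v^{n-1}\}\qquad\text{and}\qquad Y=\{a,b,c\},
\]
and connect each pair $(x,y)\in X\times Y$ as follows: for $x=1$ use the direct edge of label $y$; for $x=v$ use the length-two path $v\to vs\to s$; for $x=v^{n-1}$ use the length-two path $v^{n-1}\to v^{n-1}s\to s$. The existence of all these edges is guaranteed by the commuting-square observation above.

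What remains is to verify that the nine paths are internally disjoint, i.e.\ that the six intermediate vertices $vs$, $v^{n-1}s$ for $s\in\{a,b,c\}$ are pairwise distinct and disjoint from $X\cup Y$. I expect this to be routine rather than the main obstacle: in the direct product decomposition each element has a unique expression $(v^k,w)\in\mathbb{Z}_n\times(\mathbb{Z}_2\ast\mathbb{Z}_2\ast\mathbb{Z}_2)$, and the hypothesis $n\ge 3$ forces $v\ne v^{n-1}$ while $a,b,c$ lie in three distinct free factors, so no coincidences are possible. Hence the subgraph described is a subdivision of $K_{3,3}$, and ${\rm Cay}(G(\Gamma))$ is non-planar. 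The only real conceptual step is spotting this particular choice of $X$ and $Y$; once it is written down, there is essentially nothing to check.
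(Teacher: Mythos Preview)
Your argument is correct and follows essentially the same strategy as the paper: both exhibit a subdivision of $K_{3,3}$ inside ${\rm Cay}(G(\Gamma))$ using the partite set $\{1,\,v,\,v^{n-1}\}$ on one side. The only difference is cosmetic: the paper takes $\{av^{n-1},\,bv^{n-1},\,cv^{n-1}\}$ as the second class (with connecting paths of lengths $1$, $2$, and $n-1$), whereas you take $\{a,b,c\}$ and obtain paths of length at most $2$, which makes the disjointness check marginally cleaner.
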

\begin{proof}
We show again that a subdivision of $K_{3,3}$ is a subgraph of $Cay(G(\Gamma))$.
\newpage
\begin{figure}[h]
\begin{center}
\begin{tikzpicture}
\draw[fill=black]  (0,0) circle (2pt);
\draw[fill=black]  (4,0) circle (2pt);
\draw[fill=black]  (0,2) circle (2pt);
\draw[fill=black]  (4,2) circle (2pt);
\draw[fill=black]  (0,4) circle (2pt);
\draw[fill=black]  (4,4) circle (2pt);

\draw (0,0)--(4,0);
\draw (0,0)--(4,2);
\draw (0,0)--(4, 4);

\draw (0,2)--(4,0);
\draw (0,2)--(4,2);
\draw (0,2)--(4, 4);

\draw (0,4)--(4,0);
\draw (0,4)--(4,2);
\draw (0,4)--(4, 4);

\node at (-0.5,0) {$v^{n-1}$}; 
\node at (-0.3,2) {$v$}; 
\node at (-0.3, 4) {$1$}; 

\node at (4.7,0) {$cv^{n-1}$}; 
\node at (4.7,2) {$bv^{n-1}$}; 
\node at (4.7,4) {$av^{n-1}$};

\draw[fill=black]  (1,4) circle (2pt);
\node at (1,4.25) {$a$};
\draw[fill=black]  (1,3.5) circle (2pt);
\node at (1.25,3.6) {$b$};
\draw[fill=black]  (1,3) circle (2pt);
\node at (0.7,3) {$c$};

\draw[fill=black]  (1,2.5) circle (2pt);
\node at (0.7,2.6) {$av$};
\draw[fill=black]  (1,2) circle (2pt);
\node at (1,2.25) {$bv$};

\draw[fill=black]  (3,2) circle (2pt);

\draw[fill=black]  (2.5,2) circle (2pt);
\draw[fill=black]  (3.5,2) circle (2pt);
\draw[fill=black]  (1,1.5) circle (2pt);
\node at (0.7,1.25) {$cv$};

\draw[fill=black]  (2.5,3.25) circle (2pt);
\draw[fill=black]  (3,3.5) circle (2pt);
\draw[fill=black]  (3.5,3.75) circle (2pt);

\draw[fill=black]  (2.5,0.75) circle (2pt);
\draw[fill=black]  (3,0.5) circle (2pt);
\draw[fill=black]  (3.5,0.25) circle (2pt);

\end{tikzpicture}
\end{center}
\end{figure}

Hence, $Cay(G(\Gamma))$ is non-planar.
\end{proof}

\begin{lemma}
\label{path}
Let $\Gamma$ be the following graph product graph: 

\begin{figure}[h]
\begin{center}
\begin{tikzpicture}
\draw[fill=black]  (0,0) circle (2pt);
\draw[fill=black]  (-2,0) circle (2pt);
\draw[fill=black]  (2,0) circle (2pt);
\draw (-2,0)--(2,0);
\node at (2,-0.3) {$\langle b\rangle$}; 
\node at (0,0.3) {$\langle v\rangle$}; 
\node at (-2, -0.3) {$\langle a \rangle$}; 
\end{tikzpicture}
\end{center}
\end{figure}

If ${\rm ord}(a)={\rm ord}(b)=2$ and ${\rm ord}(v)=n\geq 3$, then ${\rm Cay}(G(\Gamma))$ is planar. 
\end{lemma}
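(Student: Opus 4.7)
My plan is to exploit the fact that the middle vertex $v$ of the path is adjacent to both $a$ and $b$ in $\Gamma$, so $v$ commutes with both of them. Hence $\langle v\rangle\cong\mathbb{Z}_n$ is central in $G(\Gamma)$. Since the induced subgraph of $\Gamma$ on $\{a,b\}$ carries no edge, Lemma~\ref{subgroup} identifies $\langle a,b\rangle\leq G(\Gamma)$ with the free product $\mathbb{Z}_2*\mathbb{Z}_2$. Combining these two facts, via the graph-product normal form (which shows that every element of $G(\Gamma)$ is uniquely of the form $v^i w$ with $w\in\langle a,b\rangle$), I obtain the decomposition $G(\Gamma)\cong\mathbb{Z}_n\times(\mathbb{Z}_2*\mathbb{Z}_2)$ with respect to the generating set $\{v\}\sqcup\{a,b\}$.

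The second step is to identify the Cayley graph. For a direct product whose generators are a disjoint union $S_1\sqcup S_2$, the Cayley graph is the Cartesian product ${\rm Cay}(G_1,S_1)\,\square\,{\rm Cay}(G_2,S_2)$ (this follows immediately from the definitions since the two types of generators act on independent coordinates). Here ${\rm Cay}(\mathbb{Z}_n,\{v\})$ is the $n$-cycle $C_n$, while ${\rm Cay}(\mathbb{Z}_2*\mathbb{Z}_2,\{a,b\})$ is a bi-infinite path $L$: every group element is a reduced alternating word in $a$ and $b$, and right multiplication by $a$ or by $b$ either appends or cancels the last letter, so every vertex has exactly two neighbours. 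Therefore ${\rm Cay}(G(\Gamma))\cong C_n\,\square\,L$.

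The third step is to exhibit an explicit planar embedding of $C_n\,\square\,L$. Enumerate the vertices of $L$ as $(g_k)_{k\in\mathbb{Z}}$ with $g_k$ adjacent to $g_{k+1}$, and fix a strictly monotone sequence of radii $r_k>0$ (for instance $r_k=2^k$, so that $r_k\to 0$ as $k\to-\infty$ and $r_k\to\infty$ as $k\to\infty$). I would draw concentric circles of radii $r_k$ centred at the origin, place the $n$ vertices $\{(i,g_k):i\in\mathbb{Z}/n\}$ of level $k$ at angles $2\pi i/n$ on the $k$-th circle, realise each cycle at level $k$ by the arcs of that circle between consecutive vertices, and connect $(i,g_k)$ to $(i,g_{k+1})$ by a straight radial segment. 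I expect the ``main obstacle'' to be nothing more than routine bookkeeping on this picture: arcs on distinct circles are automatically disjoint, radial segments at distinct angles are disjoint, and an arc meets a radial segment only at a common endpoint. Once the central-direct-product decomposition is in place, planarity is manifest from this cylinder-with-caps-collapsed picture.
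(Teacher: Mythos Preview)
Your argument is correct and is in fact more complete than the paper's own proof. The paper does not argue anything: it simply cites a figure in Mohar's article for the case $n=6$ and asserts that the general case is ``easy to verify''. You instead carry out that verification explicitly. Your identification $G(\Gamma)\cong\mathbb{Z}_n\times(\mathbb{Z}_2*\mathbb{Z}_2)$ is exactly right (the middle vertex is adjacent to both leaves, hence $v$ is central, while the induced subgraph on $\{a,b\}$ is edgeless), and the consequence ${\rm Cay}(G(\Gamma))\cong C_n\,\square\,L$ is the standard Cartesian-product description of a Cayley graph of a direct product with split generating set. Your concentric-circles drawing is precisely the ``infinite cylinder'' picture one would extract from Mohar's figure, realised in the punctured plane via $(\theta,k)\mapsto r_k(\cos\theta,\sin\theta)$; the disjointness checks you list are routine and valid. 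The only point worth stating carefully is that the image accumulates at the origin as $k\to -\infty$, but since the origin is not in the image and the graph is locally finite this causes no difficulty for planarity. Compared to the paper, your route has the advantage of being self-contained and of making the structural reason for planarity transparent; the paper's route is shorter only because it outsources the work.
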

\begin{proof}
A planar drawing of this Cayley graph is given for $n=6$ in \cite[\S 3, Fig. 1]{Mohar}. It is easy to verify how this Cayley graph should be drawn in the plane for an arbitrary $n$.
\end{proof}

\begin{lemma}
\label{cycle}
Let $\Gamma=(V, E)$ be a graph product graph. If $\Gamma$ is a cycle and if there exists $v\in V$ with $\#\varphi(v)=n\geq 3$, then ${\rm Cay}(G(\Gamma))$ is non-planar.
\end{lemma}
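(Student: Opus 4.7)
The plan is to show that $\mathrm{Cay}(G(\Gamma))$ contains a subdivision of $K_{3,3}$ as a subgraph, and conclude non-planarity via Kuratowski's theorem. Write $\Gamma = C_n$ with vertices $v_1, \ldots, v_n$ cyclically arranged, and let $v_1$ be a vertex with $\#\varphi(v_1) = k \geq 3$. Writing $a_i$ for the chosen generator of $\varphi(v_i)$, the cycle yields the commuting relations $a_i a_{i+1} = a_{i+1} a_i$ (indices mod $n$); in particular $a_1$ commutes with $a_2$ and $a_n$ but with no other generator. I may assume $\varphi(v_i) \cong \mathbb{Z}_2$ for $i \neq 1$: if two adjacent vertices of $\Gamma$ both have order $\geq 3$, then Lemma~\ref{edge} applied through Lemma~\ref{subgroup} already gives non-planarity, and raising the order of a vertex non-adjacent to $v_1$ only enlarges $\mathrm{Cay}(G(\Gamma))$ without destroying a subdivision of $K_{3,3}$ found below.

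I take as the six corners of the subdivision
\[
X_1 = 1,\quad X_2 = a_1,\quad X_3 = a_1^{k-1},\qquad Y_1 = a_2,\quad Y_2 = a_n,\quad Y_3 = a_3,
\]
and aim to construct nine internally-disjoint paths, one between each $X_i$ and each $Y_j$. Using $a_1 a_2 = a_2 a_1$ and $a_1 a_n = a_n a_1$, the six paths $X_i - Y_j$ with $j \in \{1,2\}$ are of length at most two and pass through the four intermediate vertices $a_1 a_2,\ a_1 a_n,\ a_1^{k-1} a_2,\ a_1^{k-1} a_n$, which are pairwise distinct because $k \geq 3$. For instance, $X_2 - Y_1$ traverses $a_1 \to a_1 a_2 \to a_2$, where the second edge corresponds to right-multiplication by $a_1^{-1}$, while $X_3 - Y_1$ traverses $a_1^{k-1} \to a_1^{k-1} a_2 \to a_2$, the second edge being multiplication by $a_1$ (using $a_1^k = 1$); the analogous constructions handle $X_i - Y_2$. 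The edge $X_1 - Y_3 = 1 - a_3$ is direct.

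The remaining two paths $X_2 - Y_3$ and $X_3 - Y_3$ are constructed by routing through the "interior" generators $a_4, \ldots, a_{n-1}$ of the cycle, making use of their commutations with their cyclic neighbors. In the cleanest case $n = 4$, the group decomposes as a direct product $G(\Gamma) = \langle a_1, a_3 \rangle \times \langle a_2, a_4 \rangle \cong (\mathbb{Z}_k * \mathbb{Z}_2) \times (\mathbb{Z}_2 * \mathbb{Z}_2)$, so $\mathrm{Cay}(G(\Gamma))$ is a Cartesian product of two Cayley graphs; since the first factor has a vertex of degree $\geq 3$ (the identity, with the three distinct neighbors $a_1, a_1^{-1}, a_3$) and the second is infinite, the Cartesian product contains a $K_{3,3}$ subdivision by the standard argument, and the $X_i - Y_3$ paths are easily routed through distinct "copies" of $a_3$ in the second factor. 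For $n \geq 5$, this direct product decomposition is lost, but the analogous construction still produces the two remaining paths by going through new vertices of the form $a_1^{\pm 1} a_{n-1} \cdots a_4 a_3$ and their translates under the interior commutations.

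The main obstacle is the bookkeeping needed to verify internal disjointness of all nine paths, especially in the general case $n \geq 5$. The key observation is that the intermediates of the last three paths all involve at least one generator $a_j$ with $3 \leq j \leq n-1$, whereas the intermediates of the first six paths involve only $a_1, a_2, a_n$; this guarantees the required disjointness. Finally, a direct check shows that at each of the six corners the three incoming paths arrive through distinct incident edges (for example, at $Y_1 = a_2$ the incoming paths from $X_1, X_2, X_3$ use the edges labelled $a_2$, $a_1^{-1}$, $a_1$ respectively, which are genuinely distinct since $k \geq 3$), completing the $K_{3,3}$ subdivision.
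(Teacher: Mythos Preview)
Your overall strategy matches the paper's: exhibit a subdivision of $K_{3,3}$ inside $\mathrm{Cay}(G(\Gamma))$. However, the execution has genuine gaps.

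First, the triangle case is missing. You write $\Gamma=C_n$ and then discuss $n=4$ and $n\geq 5$, but a cycle may have length $3$; in that case your corner vertices $Y_2=a_n$ and $Y_3=a_3$ coincide, so you do not even have six distinct corners. The paper treats the triangle separately with a different choice of corners.

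Second, your reduction ``raising the order of a vertex non-adjacent to $v_1$ only enlarges $\mathrm{Cay}(G(\Gamma))$'' is not correct: replacing a vertex label $\mathbb{Z}_2$ by $\mathbb{Z}_m$ does \emph{not} produce a supergraph of the old Cayley graph (already $\mathrm{Cay}(\mathbb{Z}_2)$ is not a subgraph of $\mathrm{Cay}(\mathbb{Z}_3)$), so a $K_{3,3}$ subdivision found in the reduced situation need not survive. The paper avoids this reduction and instead builds its paths so that they work for arbitrary orders on the non-neighbour vertices.

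Third, the two decisive paths $X_2\!-\!Y_3$ and $X_3\!-\!Y_3$ are never actually constructed. From $X_2=a_1$ the incident edges lead to $1,\,a_1^2,\,a_1a_2,\ldots,a_1a_n$; the edges to $1$, $a_1a_2$ and $a_1a_n$ are already consumed by the paths to $X_1$, $Y_1$, $Y_2$, and for $k=3$ even $a_1^2=X_3$ is a corner. So the only available first step is to some $a_1a_j$ with $3\leq j\leq n-1$, and since $a_1$ does not commute with such $a_j$, the route from there to $a_3$ is by no means automatic. Your $n=4$ paragraph abandons the explicit corners in favour of a Cartesian-product heuristic, and the $n\geq 5$ description ``vertices of the form $a_1^{\pm1}a_{n-1}\cdots a_4a_3$'' does not name the actual edges or verify internal disjointness. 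The paper's proof chooses different corners (for length $\geq 4$ it uses $\{1,a_1,b_1\}$ and $\{v,v^{n-1},w\}$ with $w$ the far vertex of the cycle) precisely so that these long paths can be written down explicitly along the two arcs of the cycle.
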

\begin{proof}

If $\Gamma$ has the following shape
\begin{figure}[h]
\begin{center}
\begin{tikzpicture}
\draw[fill=black]  (0,0) circle (1pt);
\draw[fill=black]  (2,0) circle (1pt);
\draw[fill=black]  (1,2) circle (1pt);

\draw (0,0)--(2,0);
\draw (0,0)--(1,2);
\draw (1,2)--(2,0);

\node at (0,-0.3) {$\langle a\rangle$}; 
\node at (2,-0.3) {$\langle b\rangle$}; 
\node at (1, 2.3) {$\langle v \rangle$}; 
\end{tikzpicture}
\end{center}
\end{figure}

we consider two cases: if ${\rm ord}(a)$ or ${\rm ord}(b)\geq 3$, then it follows from Lemma \ref{subgroup} that ${\rm Cay}(\langle a, v\mid a^m, v^n, av=va\rangle)$ where $m={\rm ord}(a)\geq 3$ is a subgraph of ${\rm Cay}(G(\Gamma))$. By Lemma \ref{edge}  ${\rm Cay}(\langle a, v\mid a^m, v^n, av=va\rangle)$ is non-planar, hence ${\rm Cay}(G(\Gamma))$ is non-planar.

If ${\rm ord}(a)={\rm ord}(b)=2$ we again construct  a subdivision of $K_{3,3}$ as a subgraph of ${\rm Cay}(G(\Gamma))$.

\begin{figure}[h]
\begin{center}
\begin{tikzpicture}
\draw[fill=black]  (0,0) circle (2pt);
\draw[fill=black]  (4,0) circle (2pt);
\draw[fill=black]  (0,2) circle (2pt);
\draw[fill=black]  (4,2) circle (2pt);
\draw[fill=black]  (0,4) circle (2pt);
\draw[fill=black]  (4,4) circle (2pt);

\draw (0,0)--(1,0);
\draw[dashed] (1,0)--(3,0);
\draw (3,0)--(4,0);

\draw (0,0)--(4,2);
\draw (0,0)--(4, 4);

\draw (0,2)--(4,0);
\draw (0,2)--(4,2);
\draw (0,2)--(4, 4);

\draw (0,4)--(4,0);
\draw (0,4)--(4,2);
\draw (0,4)--(4, 4);

\node at (-0.6,0) {$av^{n-1}$}; 
\node at (-0.5,2) {$abv$};

\draw[fill=black]  (1,2) circle (2pt);
\node at (1.3,2.2) {$bv$};

\draw[fill=black]  (1,2.5) circle (2pt);
\node at (0.7,2.7) {$ab$};

\draw[fill=black]  (1,1.5) circle (2pt);
\node at (1.3,1.7) {$av$};

\node at (-0.3, 4) {$1$}; 
\node at (4.4,0) {$v$}; 
\node at (4.3,2) {$b$}; 
\node at (4.3,4) {$a$};

\draw[fill=black]  (3,0) circle (2pt);
\node at (3,-0.3) {$v^{2}$};

\draw[fill=black]  (1,0) circle (2pt);
\node at (1,-0.3) {$v^{n-1}$};

\draw[fill=black]  (1,0.5) circle (2pt);
\node at (1.6,0.4) {$abv^{n-1}$};

\draw[fill=black]  (3,1.5) circle (2pt);
\node at (3.7,1.4) {$bv^{n-1}$};
\end{tikzpicture}
\end{center}
\end{figure}
\newpage

If $\Gamma$ is a cycle of lenght $\geq 4$
\begin{figure}[h]
\begin{center}
\begin{tikzpicture}

\draw[fill=black]  (1,0) circle (2pt);
\draw[fill=black]  (2,0) circle (2pt);
\draw[fill=black]  (3,0) circle (2pt);

\draw[fill=black]  (0,1) circle (2pt);
\draw[fill=black]  (4,1) circle (2pt);

\draw[fill=black]  (0,2) circle (2pt);
\draw[fill=black]  (4,2) circle (2pt);

\draw[fill=black]  (1,3) circle (2pt);
\draw[fill=black]  (2,3) circle (2pt);
\draw[fill=black]  (3,3) circle (2pt);

\draw (1,3)--(3,3);
\draw (1,3)--(0,2);
\draw (0,2)--(0,1);

\draw[dashed] (0,1)--(1,0);
\draw (1,3)--(0,2);
\draw[dashed] (3,0)--(4,1);
\draw (4,1)--(4,2);
\draw (4,2)--(3,3);

\draw (1,0)--(3,0);

\node at (2, 3.3) {$\langle v \rangle$}; 
\node at (1, 3.3) {$\langle a_1 \rangle$}; 
\node at (3, 3.3) {$\langle b_1 \rangle$};
\node at (-0.3, 2.3) {$\langle a_2 \rangle$}; 
\node at (4.3, 2.3) {$\langle b_2 \rangle$};
\node at (-0.4, 1) {$\langle a_3 \rangle$}; 
\node at (4.4, 1) {$\langle b_3 \rangle$};

\node at (2, -0.3) {$\langle w \rangle$};

\node at (1, -0.3) {$\langle a_k \rangle$}; 
\node at (3, -0.3) {$\langle b_l \rangle$};

\end{tikzpicture}
\end{center}
\end{figure}

then we consider again two cases: if ${\rm ord}(a_1)$ or ${\rm ord}(b_1)\geq 3$, then it follows from Lemma \ref{subgroup} that ${\rm Cay}(\langle a_1, v\mid a_1^m, v^n, a_1v=va_1\rangle)$ where $m={\rm ord}(a_1)\geq 3$ is a subgraph of ${\rm Cay}(G(\Gamma))$. By Lemma \ref{edge}  ${\rm Cay}(\langle a_1, v\mid a_1^m, v^n, a_1v=va_1\rangle)$ is non-planar, hence ${\rm Cay}(G(\Gamma))$ is non-planar.

If ${\rm ord}(a_1)={\rm ord}(b_1)=2$ we construct a subdivision of $K_{3,3}$ as a subgraph of ${\rm Cay}(G(\Gamma))$:

\begin{figure}[h]
\begin{center}
\begin{tikzpicture}
\draw[fill=black]  (0,0) circle (2pt);
\draw[fill=black]  (4,0) circle (2pt);
\draw[fill=black]  (0,2) circle (2pt);
\draw[fill=black]  (4,2) circle (2pt);
\draw[fill=black]  (0,4) circle (2pt);
\draw[fill=black]  (4,4) circle (2pt);

\draw (0,0)--(4,0);
\draw[dashed] (0,0)--(4,2);
\draw (0,0)--(4, 4);

\draw (0,2)--(4,0);
\draw (0,2)--(4,2);
\draw (0,2)--(4, 4);

\draw (0,4)--(4,0);
\draw[dashed] (0,4)--(4,2);
\draw (0,4)--(4, 4);

\node at (-0.5,0) {$b_1$}; 
\node at (-0.3,2) {$1$}; 
\node at (-0.3, 4) {$a_1$}; 

\node at (4.55,0) {$v^{n-1}$}; 
\node at (4.3,2) {$w$}; 
\node at (4.3,4) {$v$};

\draw[fill=black]  (1,4) circle (2pt);
\node at (1,4.25) {$a_1v$};
\draw[fill=black]  (1,3) circle (2pt);
\node at (0.2,3) {$a_1v^{n-1}$};

\draw[fill=black]  (1,1) circle (2pt);
\node at (0.5,1) {$b_1v$};
\draw[fill=black]  (1,0) circle (2pt);
\node at (1,-0.3) {$b_1v^{n-1}$};
\end{tikzpicture}
\end{center}
\end{figure}

The dashed paths have the following structure:

\begin{figure}[h]
\begin{center}
\begin{tikzpicture}

\draw[fill=black]  (0,0) circle (2pt);
\node at (0,-0.3) {$a_1$};
\draw[fill=black]  (1,0) circle (2pt);
\node at (1,0.3) {$a_1a_2$};
\draw[fill=black]  (2,0) circle (2pt);
\node at (2,-0.3) {$a_2$};
\draw[fill=black]  (3,0) circle (2pt);
\node at (3,0.3) {$a_2a_3$};
\draw[fill=black]  (4,0) circle (2pt);
\node at (4,-0.3) {$a_3a_2^2$};
\draw[fill=black]  (5,0) circle (2pt);
\node at (5.5,0.4) {$a_3a_2^{ord(a_2)}$};
\draw[fill=black]  (6,0) circle (2pt);
\node at (6,-0.3) {$a_k$};
\draw[fill=black]  (7,0) circle (2pt);
\node at (7,0.3) {$a_kw$};
\draw[fill=black]  (8,0) circle (2pt);
\node at (8,-0.3) {$w$};

\draw (0,0)--(4,0);
\draw[dashed] (4,0)--(6,0);

\draw (6,0)--(7,0);
\draw[dashed] (7,0)--(8,0);
\end{tikzpicture}
\end{center}
\end{figure}

\begin{figure}[h]
\begin{center}
\begin{tikzpicture}

\draw[fill=black]  (0,0) circle (2pt);
\node at (0,-0.3) {$b_1$};
\draw[fill=black]  (1,0) circle (2pt);
\node at (1,0.3) {$b_1b_2$};
\draw[fill=black]  (2,0) circle (2pt);
\node at (2,-0.3) {$b_2$};
\draw[fill=black]  (3,0) circle (2pt);
\node at (3,0.3) {$b_2b_3$};
\draw[fill=black]  (4,0) circle (2pt);
\node at (4,-0.3) {$b_3b_2^2$};
\draw[fill=black]  (5,0) circle (2pt);
\node at (5.5,0.4) {$b_3b_2^{ord(b_2)}$};
\draw[fill=black]  (6,0) circle (2pt);
\node at (6,-0.3) {$b_l$};
\draw[fill=black]  (7,0) circle (2pt);
\node at (7,0.3) {$b_lw$};
\draw[fill=black]  (8,0) circle (2pt);
\node at (8,-0.3) {$w$};

\draw (0,0)--(4,0);
\draw[dashed] (4,0)--(6,0);

\draw (6,0)--(7,0);
\draw[dashed] (7,0)--(8,0);
\end{tikzpicture}
\end{center}
\end{figure}

\newpage
Thus, ${\rm Cay}(G(\Gamma))$ is non-planar.
\end{proof}

\section{Proof of Theorem A}
Now we have all the ingredients to prove Theorem A.
\begin{proof}
Let $\Gamma$ be a graph product graph of finite cyclic groups. Let $\Gamma_{=2}$ be the subgraph generated by its vertices labelled with groups of order $2$ and $\Gamma_{>2}$ the subgraph generated by its vertices labelled with groups of order $>2$.

We first prove that if Theorem A (i), (ii), (iii) or (iv) is not true, then  ${\rm Cay}(G(\Gamma))$ is non-planar.
Assume that $\Gamma_{=2}$ in non-outerplanar, then by \cite[Theorem 1]{Droms} ${\rm Cay}(G(\Gamma_{=2}))$ is non-planar.
By Lemma \ref{subgroup}  ${\rm Cay}(G(\Gamma_{=2}))$ is a subgraph of ${\rm Cay}(G(\Gamma))$ and hence ${\rm Cay}(G(\Gamma))$ is non-planar.
Now we assume that (ii), (iii) or (iv) of Theorem A does not hold. Then it follows from Lemma \ref{subgroup} and Lemma \ref{edge}, Lemma \ref{star} or Lemma \ref{cycle} respectively, that  ${\rm Cay}(G(\Gamma))$ is non-planar.

Assume now, that the graph product graph $\Gamma$ has the properties of Theorem  A (i), (ii), (iii) and (iv). We have to show that ${\rm Cay}(G(\Gamma))$ is planar. The idea of this proof is to build the whole group $G(\Gamma)$ out of subgroups $G(\Gamma')$, whose Cayley graphs are planar, using Lemma \ref{amalgam}. 

For $v\in\Gamma$ we denote by ${\rm lk}(v)$  the subgraph of $\Gamma$ generated by its vertices adjacent to the vertex $v$. We start with the subgroup $G(\Gamma_{=2})$ whose Cayley graph is planar by Theorem A (i). Let $v$ be a vertex in $\Gamma_{>2}$. We denote by $H_1$ the subgraph of $\Gamma$ generated by $\Gamma_{=2}\cup(\left\{v\right\}, \emptyset)$.
Our first goal is to show that ${\rm Cay}(G(H_1))$ is planar.

By the assumptions (ii) and (iii) of Theorem A follows that:
\begin{enumerate}
\item[(a)] ${\rm lk}(v)=(\emptyset, \emptyset)$ or
\item[(b)] ${\rm lk}(v)=(\left\{v_1\right\}, \emptyset)$ with $\varphi(v_1)\cong\Z_2$ or
\item[(c)] ${\rm lk}(v)=(\left\{v_1, v_2\right\}, \emptyset)$ with $\varphi(v_1)\cong\Z_2$ and $\varphi(v_2)\cong\Z_2$
\end{enumerate}

If ${\rm lk}(v)=(\emptyset, \emptyset)$, then we define
$\Gamma_1:=\Gamma_{=2}, \Gamma_2:=(\left\{v\right\}, \emptyset)$. We get $H_1=\Gamma_1\cup\Gamma_2$ and $\Gamma_1\cap\Gamma_2=\emptyset$. Obviously ${\rm Cay}(G(\Gamma_1))$  and ${\rm Cay}(G(\Gamma_2))$ are both planar and the planarity of ${\rm Cay}(G(H_1))$ follows from Lemma \ref{amalgam}.

If ${\rm lk}(v)=(\left\{v_1\right\}, \emptyset)$ with $\varphi(v_1)\cong\Z_2$, then we define
$\Gamma_1:=\Gamma_{=2}, \Gamma_2:=(\left\{v, v_1\right\},\left\{\left\{v, v_1\right\}\right\})$. We get $H_1=\Gamma_1\cup\Gamma_2$ and $\Gamma_1\cap\Gamma_2=(\left\{v_1\right\}, \emptyset)$ with $\varphi(v_1)\cong\Z_2$. It is obvious that ${\rm Cay}(G(\Gamma_1))$ and ${\rm Cay}(G(\Gamma_2))$ are both planar and the planarity of ${\rm Cay}(G(H_1))$ then follows from Lemma \ref{amalgam}.

If ${\rm lk}(v)=(\left\{v_1, v_2\right\}, \emptyset)$ with $\varphi(v_1)\cong\Z_2$ and $\varphi(v_2)\cong\Z_2$, then we define the following subgraphs: 
Let $\widetilde{\Gamma_1}$ be a connected component of $\Gamma_{=2}$ such that $v_1\in\widetilde{\Gamma_1}$ and $\widetilde{\Gamma_2}$ be a connected componen of $\Gamma_{=2}$ such that $v_2\in\widetilde{\Gamma_2}$. Further,  we define $\widetilde{\Gamma_3}:=\Gamma_{=2}-(\widetilde{\Gamma_1}\cup\widetilde{\Gamma_2})$. The connected components $\widetilde{\Gamma_1}$ and $\widetilde{\Gamma_2}$ are disjoint, since $\Gamma$ has by assumption property (iv) of Theorem A.  
Now we define the following graphs: $\Gamma_1:=\widetilde{\Gamma_1}\cup\widetilde{\Gamma_3}$ and $\Gamma_2$ be the subgraph of $\Gamma$ generated by $\widetilde{\Gamma_2}$ and $\left\{v, v_1\right\}$. Applying Lemma \ref{amalgam} and Lemma \ref{path} it follows that ${\rm Cay}(G(\Gamma_2))$ is planar. Furthermore we get $H_1=\Gamma_1\cup\Gamma_2$ and  $\Gamma_1\cap\Gamma_2=(\left\{v_1\right\}, \emptyset)$ with $\varphi(v_1)\cong\Z_2$. The graph ${\rm Cay}(G(\Gamma_1))$ is as a subgraph of ${\rm Cay}(G(\Gamma_{=2}))$ planar and  ${\rm Cay}(G(\Gamma_2))$ is also planar. The planarity of ${\rm Cay}(G(H_1))$ follows from Lemma \ref{amalgam}.

Since we can repeat this for each remaining vertex $v\in\Gamma_{>2}$, the graph ${\rm Cay}(G(\Gamma))$ is planar. 
\end{proof}

\section{Proof of Proposition B}
We turn now to the proof of Proposition B.
\begin{proof}
Let $H\subseteq G$ be a finitely generated subgroup of $G$. Since $G$ has a planar Cayley graph it follows from \cite{Babai}, \cite[Theorem 2.2]{DromsServatius} that $H$ has also a planar Cayley graph. Further, it was proven in \cite[Theorem 5.1]{Droms2} that if a group has a planar Cayley graph then this group is finitely presented. Thus, $H$ is finitely presented.
\end{proof}

We want to remark, that coherence of graph products was also studied in \cite{Varghese}.

\subsection*{Acknowledgements} The author would like to thank the referee for many helpful comments.

\end{document}